\documentclass[12pt]{amsart}

\usepackage{fancyhdr}
\usepackage[matrix,arrow]{xy}

\usepackage{amssymb,latexsym,amscd,amsmath,amsthm}
\usepackage[english]{babel}
\selectlanguage{english}
\usepackage[latin1]{inputenc}
\usepackage{multind}

\input{xy}

\xyoption{all}

\newcommand{\ds}{\displaystyle}


\newcommand{\cC}{\mathcal{C}}

\newcommand{\cG}{\mathcal{G}}

\newcommand{\cM}{\mathcal{M}}

\newcommand{\Prym}{\mathrm{Prym}}

\newcommand{\Jac}{\mathrm{Jac}}

\newcommand{\Aut}{\mathrm{Aut}}



\newcommand{\lra}{\longrightarrow}

\newcommand{\ra}{\rightarrow}
\newcommand{\ol}{\overline}

\newcommand{\abf}{\bigtriangleup_\theta}
\newcommand{\ms}{\mapsto}

\newcommand{\ul}{\underline}
\newcommand{\uT}{\underline{T}}
\newcommand{\cech}{\cC^.(\fU, \uT)}


\newcommand{\ZZ}{\mathbb{Z}}

\newcommand{\fU}{\mathfrak{U}}


\newcommand{\Id}{\mathrm{Id}}



\newcommand{\Pdon}{\Prym(\pi,\Lambda)}
\newcommand{\twinv}{{H^1(Z,\underline{T})}^W}


\theoremstyle{plain}

\newtheorem{thm}{Theorem}[section]

\newtheorem{lem}[thm]{Lemma}

\newtheorem{prop}[thm]{Proposition}

\newtheorem{cor}[thm]{Corollary}

\newtheorem{rem}[thm]{Remark}
\newtheorem{defi}[thm]{Definition}

\newtheorem{ex}{Example}

\title{$N-$Bundles for $N$ an extension of a finite group by an abelian group}

\author{Yashonidhi Pandey}
\address{Chennai Mathematical Institute, Plot No. H1, Sipcot IT Park, Padur Post Office, Siruseri 603103, India \texttt{Email: ypandey@cmi.ac.in}}
\begin{document}
\maketitle

\begin{abstract}
Let $W$ be a finite group and $T$ be an abelian group. Consider an extension $0 \ra T \ra N \ra W \ra 0$. For a smooth projective curve $X$, we give a precise description of the fiber of the quotient by $T$ map $q_T: \cM_X(N) \ra \cM_X(W)$ as a torsor over an abelian variety. We also prove a result on Mumford groups. 
\end{abstract}

\section{Introduction}
Let $T$ be an arbitrary abelian group and $W$ be an arbitrary finite group. We fix an action $\sigma : W \ra Aut(T)$. 

Let $\pi:Z \ra X$ be an \'etale Galois cover of smooth projective curves of Galois group a finite group $W$.  Let $E_T$ be a $T-$bundle on $Z$. Combining the pull-back action of $W$ on $Z$ and the action $\sigma$ on the fibers, we have the twisted action of $W$ on $T-$bundles on $Z$ defined as $(w,E) \ms w^*E \times_w T$ where $\sigma(w):T \ra T$ is the understood extension of structure group from $T$ to itself. The action of $W$ on $Z$ does not in general lift to $E_T$, however on $W-$invariant principal $T-$bundles  on $Z$, we can define a Mumford group parametrising the 2-uples $(w,\gamma)$ where $\gamma$ is an isomorphism between $E$ and $wE$ of $T-$bundles of $Z$.

The second main result of this paper is the Proposition \ref{flechec} on Mumford groups.

\begin{prop} Let $\pi: Z \ra X$ be an \'etale Galois cover of smooth projective curves with Galois group a finite group $W$. We have a homomorphism of groups

$$\begin{matrix}
c: & H^1(Z,\ul{T})^W & \ra & H^2(W,T) \\
   & E_T             & \ms & [0\ra T \ra \cG^{\sigma}(E_T) \ra W \ra 0]
\end{matrix}$$
\end{prop}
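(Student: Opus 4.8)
The plan is to construct the group $\cG^\sigma(E_T)$ explicitly, exhibit it as an extension of $W$ by $T$ realizing the module structure $\sigma$, and then verify that the resulting class in $H^2(W,T)$ depends only on the isomorphism class of $E_T$ and is additive. Concretely, I would set
$$\cG^\sigma(E_T) = \{(w,\gamma) : w \in W,\ \gamma : E_T \xrightarrow{\ \sim\ } wE_T\},$$
where $wE_T = w^*E_T \times_w T$ denotes the twisted action, and define the multiplication by
$$(w_1,\gamma_1)\cdot(w_2,\gamma_2) = (w_1w_2,\ (w_1\gamma_2)\circ\gamma_1),$$
using the functoriality of $w_1(-)$ applied to $\gamma_2 : E_T \to w_2E_T$ together with the canonical identification $w_1(w_2E_T) \cong (w_1w_2)E_T$ coming from $w_1^*w_2^* \cong (w_1w_2)^*$ and $\sigma(w_1)\sigma(w_2)=\sigma(w_1w_2)$. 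The projection $(w,\gamma)\ms w$ is a homomorphism onto $W$; it is surjective precisely because $E_T$ being $W$-invariant means each $wE_T$ is isomorphic to $E_T$, and its kernel is $\Aut(E_T)$. Since $Z$ is connected and $T$ abelian, $\Aut(E_T) = H^0(Z,\ul T) = T$, so we obtain an extension $0 \ra T \ra \cG^\sigma(E_T) \ra W \ra 0$.

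Next I would identify the induced $W$-module structure. For $t \in T = \Aut(E_T)$ and any lift $(w,\gamma)$, a direct computation of the conjugate $(w,\gamma)(e,t)(w,\gamma)^{-1}$ via the multiplication above reduces — after cancelling $\gamma$ against its inverse and using that the scalar automorphisms $T$ are central in $\Aut$ of each bundle — to the automorphism $w(t)$ of $E_T$. Because extending the structure group along $\sigma(w)$ carries the scalar automorphism $t$ to $\sigma(w)(t)$, this conjugate equals $\sigma(w)(t)$. Hence the extension induces exactly the action $\sigma$, so its class genuinely lies in $H^2(W,T)$ computed with respect to $\sigma$, and $c$ is well-defined on objects.

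To see that $c$ factors through isomorphism classes, I would observe that an isomorphism $\phi : E_T \to E_T'$ of $W$-invariant bundles induces $(w,\gamma)\ms(w,(w\phi)\circ\gamma\circ\phi^{-1})$, an isomorphism $\cG^\sigma(E_T)\cong\cG^\sigma(E_T')$ that is the identity on the kernel $T$ (again since $T$ is abelian, $\phi$-conjugation fixes scalars) and covers $\mathrm{id}_W$; thus the two extensions are equivalent. Additivity is the heart of the statement: given $W$-invariant $E_T, E_T'$ I would send
$$\bigl((w,\gamma),(w,\gamma')\bigr)\ms\bigl(w,\gamma\otimes\gamma'\bigr)$$
to define a morphism from the fibre product $\cG^\sigma(E_T)\times_W\cG^\sigma(E_T')$ to $\cG^\sigma(E_T\otimes E_T')$. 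On kernels this morphism is the multiplication $T\times T \ra T$, which is precisely the pushout datum defining the Baer sum; checking that it is an isomorphism onto the target then yields $c(E_T\otimes E_T')=c(E_T)+c(E_T')$.

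I expect the main obstacle to be the coherence bookkeeping underlying the group law: one must pin down the canonical isomorphisms $w_1(w_2E_T)\cong(w_1w_2)E_T$ and verify that the multiplication is associative, equivalently that these isomorphisms satisfy the relevant cocycle condition, since only then is $\cG^\sigma(E_T)$ an honest group and the conjugation computation unambiguous. The additivity step is conceptually clean once one recognizes the fibre-product-and-pushout as the Baer sum, but it too relies on the compatibility of the tensor product of $T$-bundles with the twisted action, which must be checked against the same coherence data.
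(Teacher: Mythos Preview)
Your argument is correct, but it proceeds quite differently from the paper. The paper identifies $c$ as the transgression $d_2^{0,1}\colon E_2^{0,1}\to E_2^{2,0}$ in the Grothendieck spectral sequence of the composite functor $\Gamma^W\circ\Gamma_Z$, so that the homomorphism property is automatic (it is a differential in a spectral sequence of abelian groups); the bulk of the paper's proof is then an explicit \v{C}ech-cocycle chase showing that this spectral-sequence map agrees with $E_T\mapsto[\cG^\sigma(E_T)]$. You instead work intrinsically with the Mumford group and prove additivity by hand, recognizing the fibre product $\cG^\sigma(E_T)\times_W\cG^\sigma(E_T')$ pushed out along multiplication $T\times T\to T$ as the Baer sum and matching it with $\cG^\sigma(E_T\otimes E_T')$ via $((w,\gamma),(w,\gamma'))\mapsto(w,\gamma\otimes\gamma')$. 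Your route is more elementary and makes the additivity transparent without any spectral-sequence machinery; the paper's route buys more, since it simultaneously places $c$ inside the five-term exact sequence
\[
0\to H^1(W,T)\to H^1(Z;W;\underline{T})\to H^1(Z,\underline{T})^W\xrightarrow{c} H^2(W,T)\to H^2(Z;W;\underline{T}),
\]
which controls the kernel and image of $c$ and is the natural setting for the later claim that the fibres $H^1(Z,\underline{T})^W_\eta$ are nonempty. The coherence issue you flag (the cocycle condition for $w_1(w_2E_T)\cong(w_1w_2)E_T$) is genuine but routine, coming from $w_1^*w_2^*\cong(w_2w_1)^*$ together with $\sigma$ being a homomorphism.
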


Let $\eta$ denote the class of an extension $[0 \ra T \ra N \ra W \ra 0] \in H^2(W,T)$. 

The main theorem \ref{Ndecomposition} of this paper can be described as follows.

\begin{thm} The fiber of the quotient by $T$ map $q_T: \cM_X(N) \ra \cM_X(W)$ over $\pi: Z \ra X$ is $H^1(Z,\ul{T})^W_{\eta}.$ 
\end{thm}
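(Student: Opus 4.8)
The plan is to realize the fiber as the set of isomorphism classes of $W$-invariant $T$-bundles on $Z$ whose Mumford class equals $\eta$, so I first fix the meaning of the target: $H^1(Z,\ul T)^W_\eta := c^{-1}(\eta)$, the fiber over $\eta$ of the homomorphism $c$ of the Proposition. The backbone is the elementary remark that an $N$-bundle $P\to X$ carries two compatible pieces of structure coming from $0\ra T\ra N\ra W\ra 0$: the associated $W$-bundle $P/T\to X$, which is the image under $q_T$, and the projection $P\to P/T$, which is a principal $T$-bundle. Accordingly, for the forward map I would take $P$ in the fiber of $q_T$ over $\pi$, so that $P/T\cong Z$ as $W$-bundles on $X$, and set $E:=P$ regarded as a $T$-bundle on $Z=P/T$. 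For $w\in W$ and any lift $n\in N$ of $w$, right translation $R_n:P\to P$ covers the deck transformation $w:Z\to Z$ and satisfies $R_n(p\cdot t)=R_n(p)\cdot\sigma(w)^{-1}(t)$; after applying the twist $(-)\times_w T$ this is exactly an isomorphism $E\xrightarrow{\sim}w^*E\times_w T$ of $T$-bundles on $Z$. Hence $[E]\in H^1(Z,\ul T)^W$, and $P\mapsto[E]$ is well defined on isomorphism classes.

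Next I would identify the obstruction, which is where the just-stated Proposition does the work. The key point is that the assignment $(w,\,R_n\bmod T)$ realizes an isomorphism of extensions $N\xrightarrow{\sim}\cG^{\sigma}(E)$: the lifts $n$ of $w$ are precisely the maps $P\to P$ covering $w$, i.e.\ precisely the isomorphisms $\gamma$ that constitute the Mumford group, and $T\subset N$ maps to $T=\Aut_T(E)\subset\cG^{\sigma}(E)$. By the definition of $c$, one then has $c(E)=[\cG^{\sigma}(E)]=[N]=\eta$, so the forward map lands in $c^{-1}(\eta)$ as required.

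For the inverse I would start from $E$ with $c(E)=\eta$ and fix an isomorphism of extensions $N\xrightarrow{\sim}\cG^{\sigma}(E)$. The Mumford group acts tautologically on $E$ over $Z$ (each $(w,\gamma)$ acts by $\gamma$, covering $w$), with $T$ acting through the bundle structure; transporting this action along $N\cong\cG^{\sigma}(E)$ makes $E$ a space carrying a free $N$-action whose quotient is $E/\cG^{\sigma}(E)=(E/T)/W=Z/W=X$. Freeness together with transitivity on fibers shows $P:=E\to X$ is a principal $N$-bundle, and by construction $P/T=Z$, placing $P$ in the correct fiber. A direct verification then shows the two constructions are mutually inverse on isomorphism classes.

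The genuinely delicate part is establishing bijectivity at the level of isomorphism classes, the geometric constructions themselves being routine once set up. Two points need care: (i) injectivity, namely recovering $P$ up to $N$-isomorphism from the bare class $[E]\in H^1(Z,\ul T)^W$, even though the invariance data $\{\gamma_w\}$ is not remembered by $[E]$ alone; and (ii) independence of the inverse construction from the chosen isomorphism $N\cong\cG^{\sigma}(E)$, two such choices differing by an automorphism of the extension $N$ (a class in $H^1(W,T)$), which must be shown to yield an isomorphic $N$-bundle. I expect to settle both by the universal property of the Mumford group together with a cocycle computation showing that the ambiguity in the $\gamma_w$ is absorbed by $T$-valued gauge transformations of $E$ and hence does not alter the class of $P$. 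Once the bijection is in place, the torsor picture asserted in the abstract follows formally: since $c$ is a group homomorphism, $c^{-1}(\eta)$ is a coset of $\ker c=H^1(Z,\ul T)^W_0$, which carries the structure of the relevant abelian variety.
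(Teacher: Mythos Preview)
Your approach is essentially the paper's, with the two directions swapped. The paper takes the abelianisation map $\Delta_\theta: H^1(Z,\ul T)^W_\eta \to \cM_X(N)$, $E_T\mapsto (E_T\times^T N)/W$, as primary and proves it bijects onto the fiber by assembling three lemmas: it lands in the fiber (Lemma~\ref{quotparTsurZ}), it is surjective because any $N$-bundle $P$ with $P/T=Z$ is recovered as $\Delta_\theta(P\text{-as-}T\text{-bundle})$ (Lemma~\ref{recMXN}), and it is injective (Corollary~\ref{ablinj}, itself deduced from Proposition~\ref{pushdown=abl} which identifies $E$ with its own abelianisation as a $\cG^\sigma(E)$-bundle). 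Your forward map $P\mapsto (P\to P/T)$ is exactly the content of Lemma~\ref{mumgpE_N}, and your inverse via the tautological Mumford action is exactly Proposition~\ref{pushdown=abl}; Lemma~\ref{recMXN} is the statement that these are mutually inverse in one direction.

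One organisational remark: your delicate point (ii), the dependence on a choice of isomorphism $N\cong\cG^\sigma(E)$, is not addressed head-on in the paper either; the paper sidesteps it by proving $\Delta_\theta$ injective and then checking surjectivity only on bundles $P$ already equipped with an $N$-structure, where Lemma~\ref{mumgpE_N} furnishes a \emph{canonical} identification $N\cong\cG^\sigma(P)$. In other words, the paper never needs the inverse to be well defined independently of the choice, because it verifies $\Delta_\theta\circ(\text{forward})=\mathrm{id}$ using that canonical identification, and this together with injectivity of $\Delta_\theta$ suffices. You can adopt the same shortcut and avoid the cocycle computation you anticipate: prove your forward map is injective directly (if $P\cong P'$ as $T$-bundles on $Z$, transport the canonical $N\cong\cG^\sigma(P)$ to $\cG^\sigma(P')$ and compare with the canonical one there), and use your inverse construction only to witness surjectivity of the forward map.
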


We give applications of these results by taking $N$ to be the dihedral group, the Weyl group of type $B_n$, $C_n$ and $D_n$ to describe the set of $N-$bundles $\cM_X(N)$ on a curve $X$. 

Much of these results were obtained to study the Weyl group, the torus and the Lie group in the context of abelianisation. We thus use suggestively the notation
$W$ for an arbitrary finite group and $T$ for a finite abelian group. 

I wish to thank my advisor Christian Pauly for his advices and help in the preperation of this paper and my thesis.

\section{Principal $N-$bundles: notation and known results}
Let $T$ be an arbitrary abelian group and $W$ be an arbitrary finite group. We fix an action $\sigma : W \ra Aut(T)$. Let $N$ be an arbitrary extension
$$0 \ra T \ra N \ra W \ra 0$$
inducing the action $\sigma$.

\begin{defi} For $w \in W$ and a $T-$bundle $E$ we denote by $E \times_w T$ the quotient
of $E \times T$ by the relation 
$$(et,t')~ (e,w(t)t').$$
and call it the extension of structure group of $E$ by $w$. We call the following map the evaluation morphism
 $$\begin{matrix} 
\nu_w: & E_T \times_w T   & \ra & E_T \\
       & \overline{(e,t)} & \ms & ew^{-1}(t)
\end{matrix}$$
\end{defi}

\begin{defi} We define the twisted action of $W$ on $T-$bundles on $Z$ 
$$\begin{matrix}
W \times H^1(Z,T) & \ra & H^1(Z,T) \\
 (w, E)           & \ms & w^*E \times_w T
\end{matrix}$$
\end{defi}
Notice that pull-back by $w$ and extension of structure group by $w$ commute with eachother. We shall denote the twisted action of $w$ on $E$ as $w.E$.

For the rest of  this section let $E_T$ denote a $T-$bundle on $Z$ invariant under the twisted action of $W$.

\begin{defi} We define for $E_T$ the Mumford group as follows
$$\cG^{\sigma}(E)= \{ (w,\gamma) | \gamma: E \ra w.E \}$$ 
We define the composition $(w_1, \gamma_1) \circ (w_2, \gamma_2)$ as $(w_1 \circ w_2, w_1(\gamma_2) \gamma_1)$ where
$w_1(\gamma_2)= w_1^* \gamma_2 \times_{w_1} \circ \gamma_1$.
\end{defi}

Since $E$ and $w.E$ are principal $T-$bundles and $T$ is abelian, so the set of isomorphisms between them is a torsor on $T$ and we have $\Aut_T(E) = T$.  Thus we can talk of a Mumford group (instead of a sheaf of groups).

\begin{defi} For $E_T \in H^1(Z,T)^W$, we define a right action of $\cG^{\sigma}(E)$ on $E_T$ as follows: for $(w,\gamma)=n \in \cG^{\sigma}(E)$, we define $\mu_n$ as the  composition of morphisms
\begin{equation} \label{Naction}
\xymatrix{
E_T \ar[r]^{\gamma~~~~} & w^* E_T \times_w T \ar[r] \ar[d] & E_T \times_w T \ar[r]^{~~~~\nu_w} \ar[d] & E_T \\
                  &                  Z \ar[r]^w      &                Z                      &
}
\end{equation}
 \end{defi}

For ease of notation we will denote the Mumford group $\cG^{\sigma}(E)$ of $E_T$ by $N$.

\begin{rem} \label{action} It follows directly (cf. \cite{lp} [Prop 6.3]) that this action extends the action of $T$ on $E_T$ and lifts the action of $W$ on $Z$ over $X$.
\end{rem}

Consider the following action:
$$\begin{matrix}
\Psi_{E_T} : & (E \times_T N) \times N  & \lra & E \times_T N \\
             & (\overline{(e,n')},n)  & \ms  & \overline{(\mu_n(e),n^{-1}n')}
\end{matrix}$$

\begin{rem} \label{lin} It is proved in \cite{lp} Prop 6.6 that $E_N := E_T \times^T N$ admits a canonical $W-$linearisation induced from $E_T$ for $T$ a torus. But the same proof works for $T$ an arbitrary abelian group. 
\end{rem}

 We denote the linearisation by $\Phi_{E_T}$. 

\section{A result on Mumford group}
\begin{prop} \label{flechec} Let $\pi: Z \ra X$ be an \'etale Galois cover of smooth projective curves with Galois group a finite group $W$. We have a homomorphism of groups

$$\begin{matrix}
c: & H^1(Z,\ul{T})^W & \ra & H^2(W,T) \\
   & E_T             & \ms & [0\ra T \ra \cG^{\sigma}(E_T) \ra W \ra 0]
\end{matrix}$$
\end{prop}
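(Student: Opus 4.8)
The plan is to reduce the statement to the classical dictionary between group extensions and $2$-cocycles, and then to verify additivity directly at the cocycle level. First I would make the extension class attached to a single $E_T$ explicit. Fix $E_T \in H^1(Z,\ul{T})^W$. Since $E$ is invariant under the twisted action, for each $w \in W$ one may choose an isomorphism $\gamma_w : E \ra w.E$ of $T$-bundles, so that $w \ms (w,\gamma_w)$ is a set-theoretic section $s$ of the projection $\cG^{\sigma}(E) \ra W$; over the identity the fibre is $\Aut_T(E)=T$, and surjectivity of the projection (guaranteed by invariance) yields the sequence $0 \ra T \ra \cG^{\sigma}(E) \ra W \ra 0$ named in the statement. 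Using the composition law $(w_1,\gamma_1)\circ(w_2,\gamma_2)=(w_1 w_2,\, w_1(\gamma_2)\circ\gamma_1)$, the failure of $s$ to be a homomorphism is measured by
$$ f_E(w_1,w_2) := \gamma_{w_1 w_2}^{-1}\circ\bigl(w_1(\gamma_{w_2})\circ\gamma_{w_1}\bigr)\ \in\ \Aut_T(E)=T. $$
A routine check (and the fact that conjugation by $(w,\gamma)$ acts on $\Aut_T(E)=T$ through $\sigma(w)$) shows $f_E$ is a $2$-cocycle for the action $\sigma$ whose class is independent of the choices $\gamma_w$ and equals $c(E_T)$. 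This records the usual extension-to-$H^2$ correspondence, so no new work is needed here.

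Second, I would confirm that $c$ is defined on a group and is compatible with the source group law. The operation on $H^1(Z,\ul{T})$ is the contracted product of $T$-torsors. Because $\sigma(w)\in\Aut(T)$ is a group automorphism, extension of structure group $\times_w T$ commutes with contracted products, and pull-back $w^*$ does so as well; hence the twisted action satisfies $w.(E_1 E_2)\cong (w.E_1)(w.E_2)$. In particular, if $E_1,E_2$ are $W$-invariant then so is their contracted product, so $H^1(Z,\ul{T})^W$ is genuinely a subgroup on which $c$ is defined.

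Finally comes the heart of the argument, additivity. Given $W$-invariant $E_1,E_2$, choose isomorphisms $\gamma^i_w : E_i \ra w.E_i$ and take the contracted product $\gamma^1_w \cdot \gamma^2_w : E_1 E_2 \ra (w.E_1)(w.E_2)=w.(E_1 E_2)$ as the distinguished isomorphism for $E_1 E_2$. Since both the twisted action $\gamma \ms w(\gamma)$ and the Mumford composition are computed componentwise on such products, one obtains
$$ f_{E_1 E_2}(w_1,w_2) = f_{E_1}(w_1,w_2)\cdot f_{E_2}(w_1,w_2) $$
under the canonical identifications $\Aut_T(E_i)=T=\Aut_T(E_1 E_2)$, the contracted product of automorphisms corresponding to the product in $T$. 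Passing to cohomology classes and recalling that the Baer sum of extensions is addition of cocycle classes, this gives $c(E_1 E_2)=c(E_1)+c(E_2)$, which is precisely the asserted homomorphism property.

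I expect the main obstacle to be bookkeeping rather than anything conceptual: one must pin down that the canonical isomorphism $\Aut_T(\text{torsor})=T$ sends a contracted product of automorphisms to the product in $T$, and that $\gamma \ms w(\gamma)$ is multiplicative for contracted products, so that the componentwise computation of $f_{E_1 E_2}$ is legitimate. Once these compatibilities are secured, the cocycle identity $f_{E_1 E_2}=f_{E_1}f_{E_2}$, and hence the proposition, follows immediately.
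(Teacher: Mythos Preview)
Your argument is correct, but the paper takes a genuinely different route. You establish the homomorphism property by a direct cocycle computation: choose sections $\gamma^i_w$ for $E_1,E_2$, take their contracted product as the section for $E_1E_2$, and verify $f_{E_1E_2}=f_{E_1}f_{E_2}$ using the functoriality of pull-back and extension of structure group with respect to contracted products. The paper instead identifies $c$ with the transgression map $E_2^{0,1}\ra E_2^{2,0}$ in the five-term exact sequence of the Grothendieck spectral sequence for the composite functor $\Gamma^W\circ\Gamma_Z$, so that the homomorphism property is automatic; the bulk of the paper's proof is then a \v{C}ech-cochain chase showing that this spectral-sequence map, when unwound, produces exactly the factor set $f_\alpha$ of the Mumford extension. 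Your approach is more elementary and self-contained, avoiding spectral-sequence machinery entirely. The paper's approach, on the other hand, embeds $c$ into the exact sequence
\[
0\ra H^1(W,T)\ra H^1(Z;W;\ul{T})\ra H^1(Z,\ul{T})^W\stackrel{c}{\ra} H^2(W,T)\ra H^2(Z;W;\ul{T}),
\]
which gives structural information about the kernel and image of $c$ that your direct computation does not immediately yield.
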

This Proposition is the Proposition 7.1 in \cite{lp}. 
\begin{proof} We have two left exact functors

\begin{enumerate} 
\item ${\Gamma}_Z$ : \{Sheaves of $W$-modules on $Z$\} $\ra$ \{$W$-modules\} 
$$\underline{A} \ra H^0(Z,\underline{A})$$
\item ${\Gamma}^W$ : \{$W$-modules\} $\ra$ \{abelian groups \} 
$M \ra M^W$ 
\end{enumerate}
We denote the composite functor as $\Gamma^W_Z = \Gamma^W \circ \Gamma_Z$ whose  $n-$th derived fonctor is calculated by the Grothendieck spectral sequence of composite of two functors

\[ E^{p,q}_2 = H^p(W, H^q(Z,\underline{A})) \Rightarrow E^{p,q} = H^{p+q}(Z;W;\underline{A})
\]

We have the following short exact sequence of low degree terms
\[ 0 \ra E_2^{1,0} \ra E^1 \ra E^{0,1}_2 \stackrel{c}{\ra} E^{2,0}_2 \ra E^2
\]

In the case when the group $W$ acts upon $T$, this sequence becomes
\[ 0 \ra H^1(W,T) \ra H^1(Z;W;\underline{T}) \ra H^1(Z,\ul{T})^W \stackrel{c}{\ra} H^2(W,T) \ra H^2(Z;W; \underline{T})
\]
 

Let us fix a principal $T$-bundle $E_T \in \twinv$. 
We associate to the extension 
\begin{equation} \label{mumgp}
 0 \ra T \ra \cG^{\sigma}(E_T) \stackrel{p}{\ra} W \ra 0 
\end{equation}
an element of $H^2(W,T)$ by taking a set theoretic section $\alpha$ of $p$.

For each $w \in W$, we get an isomorphism $\alpha(w):E_T \ra w E_T$. We identify $Aut_T(E_T)$ and $T$ as $T$ is abelian. To $\alpha$ we associate the $2$-cocyle
\begin{equation}
\begin{matrix} \label{factorset} f_{\alpha}: & W \times W & \ra & T \\
                             & (w_1, w_2) & \ms & {\alpha (w_2w_1)}^{-1} \circ w_1 \alpha(w_2) \circ \alpha(w_1)
\end{matrix}
\end{equation}

If we change $\alpha$ by $\beta$, we have $f_{\beta} - f_{\alpha} = d \theta$, where $\theta$ 
is the map 

$$\begin{matrix}
\theta :&  W &\ra & T \\
        &  w &\ms & {\beta(w)}^{-1} \circ \alpha(w).
\end{matrix}$$
Thus the  class of $f_\alpha$ in $H^2(W,T)$ is well defined. It is equal to the classs of  (\ref{mumgp}) by the natural correspondance between extensions of groups by abelian groups and $2-$cocycles.

Let us explicit the map $c$. Let $\fU$ be a  $W$-invariant covering of  $Z$ by affine open sets 
,that is for  $U \in \fU$, $w.U \in \fU$ also. The  Cech complexes gives a resolution $ \uT \ra \cech $ of the sheaf  $\uT$ on $Z$. We deduce from the resolution two short exact sequences

\begin{equation} \label{sccech} 0 \ra Z^0(\cech) \ra C^0(\fU, \uT) \ra B^1(\cech) \ra 0 
\end{equation}
\begin{equation} \label{sccoh} 0 \ra B^1(\cech) \ra Z^1(\cech) \ra H^1(\cech) \ra 0 
\end{equation}

Let us abbreviate the above groups by $Z^0$, $C^0$, $B^1$, $Z^1$ et $H^1$ respectively.

By the twisted action
$$\begin{matrix} W \times C^i(\fU, \uT) & \ra & C^i(\fU, \uT) \\
                 (w, a_{k,..,l})        & \ms & {w(a_{w^{-1}k,..,w^{-1}l})}_{k,..,l}
\end{matrix}$$ 
we take the resolution by the complexes

\begin{equation} \label{comcech}
\xymatrix{
0 \ar[r] & Z^0 \ar[r]\ar[d]         & C^0 \ar[r] \ar[d]            & B^1 \ar[r] \ar[d]         & 0 \\
0 \ar[r] & C^.(W,Z^0) \ar[r]        & C^.(W, C^0) \ar[r]           & C^.(W, B^1) \ar[r]        & 0 
}
\end{equation}

\begin{equation} \label{comcoh}
\xymatrix{
0 \ar[r] & B^1 \ar[r] \ar[d]        & Z^1 \ar[r] \ar[d]        & H^1 \ar[r] \ar[d]        & 0 \\
0 \ar[r] & C^.(W,B^1) \ar[r]        & C^.(W,Z^1) \ar[r]        & C^.(W,H^1) \ar[r]        & 0
} 
\end{equation}

By the definition of the spectral sequence, the map 
$c$ is the composition of 
$$ H^0(C^.(W,H^1)) \ra H^1( C^.(W,B^1)) \ra H^2(C^.(W,Z^0)),$$
where the morphisms between the groups are connection morphisms.

For $E_T \in \twinv \subset  C^0(W,H^1)$, let $a:= (a_{i,j}) \in Z^1(\cech)$ be a $1$-cocycle 
antecedent of $E_T$. Associated to $a$, there is a $canonical$ choice of a $1$-cocycle $w a$  antecedent of the  principal $w E_T$ bundle for the twisted action of $W$ on $E_T$. Recall that by 
definition
$(wa)_{i,j} := {w(a_{w^{-1}i,w^{-1}j})}_{i,j}$. We consider the first terms of \ref{comcoh} and denote the vertical arrows by $d$
\begin{equation}
\xymatrix{
         & 0 \ar[d]                    & 0 \ar[d]                   & 0 \ar[d]                   &   \\
0 \ar[r] & B^1(\cech)^W \ar[r]  \ar[d] & Z^1(\cech)^W \ar[r] \ar[d] & H^1(\cech)^W \ar[r] \ar[d] & 0 \\
0 \ar[r] & C^0(W,B^1) \ar[r] \ar[d]    & C^0(W,Z^1) \ar[r] \ar[d]   & C^0(W,H^1) \ar[r] \ar[d]   & 0 \\
0 \ar[r] & C^1(W,B^1) \ar[r] \ar[d]    & C^1(W,Z^1) \ar[r]          & C^1(W,H^1) \ar[r]          & 0 \\
0 \ar[r] & C^2(W,B^1)                  &                            &                            &  
}
\end{equation}

Let us denote $d(a)$ by $h$. Since $d(E_T) = 0$, we have $h \in C^1(W,B^1)$. Explicitly $h$ is given by   
\begin{equation}
\begin{matrix} \label{d(a)} h= d(a) :  & W & \ra & Z^1(\cech) &       \\
                         & w & \ms & w(a) \circ a^{-1}  
\end{matrix}
\end{equation}
where over $u \in U_i \cap U_j$, we define $w(a) \circ a^{-1}(i,j)(u) := w(a)(i,j)(u) a(i,j)(u)^{-1}$.
We consider the first terms of (\ref{comcech}) and let us denote the vertical arrows by $d$
\begin{equation}
\xymatrix{
         & 0 \ar[d]                    & 0 \ar[d]                   & 0 \ar[d]                   &   \\
0 \ar[r] & Z^0(\cech)^W \ar[r]  \ar[d] & C^0(\cech)^W \ar[r] \ar[d] & B^1(\cech)^W \ar[r] \ar[d] & 0 \\
0 \ar[r] & C^0(W,Z^0) \ar[r] \ar[d]    & C^0(W,C^0) \ar[r] \ar[d]   & C^0(W,B^1) \ar[r] \ar[d]   & 0 \\
0 \ar[r] & C^1(W,Z^0) \ar[r] \ar[d]    & C^1(W,C^0) \ar[r] \ar[d]   & C^1(W,B^1) \ar[r] \ar[d]   & 0 \\
0 \ar[r] & C^2(W,Z^0) \ar[r]           & C^2(W,C^0) \ar[r]          & C^2(W,B^1) \ar[r]          & 0 
}
\end{equation}

We denote by $g \in C^1(W,C^0)$ an antecedent of $h$. Then for all $w \in W$, we have 
\begin{equation} \label{trivialisation}
g(w)(i) g(w)(j)^{-1} = h(w)(i,j)
\end{equation} 

Since $Aut_T(E_T) = T$ is commutative, by (\ref{trivialisation}) and (\ref{d(a)}) 
we deduce the equality

\[ w(a)(i,j) = g(w)(i) a(i,j) g(w)(j)^{-1}
\]
which means that $g$ explicitly gives an isomorphism $g(w) : E_T \ra w E_T$ in local coordinates
for all $w \in W$. Thus $g$ is a section of $p$ of the short exact sequence (\ref{mumgp}). 
Since $0 = d(d(a)) = d(h)$ so  $k:= d(g) \in C^2(W,Z^0)$ is a  $2$-cocycle. Now since $H^0(\cech)= \uT$, we have 
$$\begin{matrix} k: W \times W &  \ra & T \\
                      (w_1,w_2) &  \ms & w_1(g(w_2)) g(w_1 w_2)^{-1} g(w_1) 
\end{matrix}$$

The map $c$ sends $E_T$ to the class of $k$ in $H^2(W,T)$, by the definition of the spectral sequence. This definition is independant of the choice of the antecedant  $a$ and $g$ of $E_T$ and $h$ respectively. Since $g(w) : E_T \ra w E_T$ is an isomorphism,  $f_{g}$ (ref \ref{factorset}) is equal to  $k$ because the passage from $g$ to $k$ and to $f_{g}$ is the same. Thus a fortiori 
the image of $k$ and of $f_{g}$- the class of Mumford group in $H^2(W,T)$ coincide.
\end{proof}

\section{The fiber of $\cM_X(N) \ra \cM_X(W)$}

Fix an extension class $\eta= [0 \ra T \ra N \ra W] \in H^2(W,T)$. 
\begin{defi} One calls the abelianisation map, the map that for a $T-$bundle $E_T \in H^1(Z,\ul{T})^W_{\eta}$ associates the quotient $F_N$ of $E_N$ by $\Phi_{E_T}$. 
\end{defi}

\begin{lem} \label{quotparTsurZ} The principal bundle quotient of  $F_N$by $T$ is canonically isomorphic to $Z$.
\end{lem}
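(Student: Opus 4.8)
The plan is to compute the quotient $F_N/T$ in stages, exploiting the fact that the linearisation $\Phi_{E_T}$ and the structure-group action of $N$ commute. Recall that $F_N$ is the quotient of the principal $N$-bundle $E_N = E_T\times^T N$ on $Z$ by the $W$-action $\Phi_{E_T}$; since $\Phi_{E_T}$ covers the action of $W$ on $Z$ and commutes with the right $N$-action (Remark \ref{lin}), it in particular commutes with the right action of $T\subset N$. Hence the two quotients may be interchanged, and I would first record the identification
\[
 F_N/T \;=\; (E_N/\Phi_{E_T})/T \;=\; (E_N/T)/\Phi_{E_T}.
\]

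Next I would compute $E_N/T$ as a bundle on $Z$ using functoriality of associated bundles. Since $T$ is the kernel of $p\colon N\to W$, it acts trivially on $N/T=W$, so
\[
 E_N/T \;=\; E_N\times^N (N/T) \;=\; E_T\times^T W \;=\; (E_T/T)\times W \;=\; Z\times W,
\]
canonically the trivial principal $W$-bundle on $Z$, the trivialisation being induced by $p$. Under this identification the residual structure-group action of $W=N/T$ is right translation on the second factor.

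The main step is then to describe the induced $W$-linearisation on $Z\times W$. I would read it off from the defining action $\Psi_{E_T}(\overline{(e,n')},n)=\overline{(\mu_n(e),n^{-1}n')}$: choosing a lift $\tilde w\in N$ of $w\in W$, the map $\mu_{\tilde w}$ covers the action of $w$ on $Z$ (Remark \ref{action}), while left multiplication by $\tilde w^{-1}$ on the $N$-factor descends, via $p$, to left multiplication by $w^{-1}$ on $N/T=W$. Independence of the lift $\tilde w$ is immediate, since the right $T$-coordinate has already been killed. Thus, with the conventions of Remark \ref{action}, $\Phi_{E_T}$ acts diagonally on $Z\times W$, covering the action on the base and translating the fibre $W$; concretely $(z,a)\mapsto(w^{-1}\cdot z,\;w^{-1}a)$.

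Finally I would form the quotient by this diagonal action. The assignment $(z,a)\mapsto a^{-1}\cdot z$ is $\Phi_{E_T}$-invariant and descends to a morphism $(Z\times W)/\Phi_{E_T}\to Z$ over $X$; it is an isomorphism, and tracing the residual structure action shows that it intertwines right translation on $Z\times W$ with the Galois $W$-action on $Z$. Hence $F_N/T\cong Z$ as principal $W$-bundles over $X$, and canonically, since no choice other than the functorial identification of $E_N/T$ intervenes. The one delicate point is the third step, pinning down the induced linearisation on $Z\times W$ and checking that both coordinates transform by the same group element, so that the action is genuinely diagonal; this is precisely what forces the quotient to reproduce the cover $Z$ rather than the trivial one, and the remainder is formal manipulation of associated bundles and commuting quotients.
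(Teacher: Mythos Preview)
Your proof is correct and follows essentially the same route as the paper: both arguments identify $E_N/T$ canonically with the trivial $W$-bundle $Z\times W$, compute that the $W$-linearisation $\Phi_{E_T}$ descends to the diagonal action $(z,w')\mapsto(zw,\,w^{-1}w')$, and conclude that the quotient is the Galois cover $Z$. The only cosmetic difference is that you phrase the first reduction as ``commuting the two quotients'' whereas the paper phrases it as ``pulling back to $Z$ and comparing linearisations''; the underlying computation is identical.
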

\begin{proof} To show that $F_N/T$ is canonically isomorphic to $Z$ it suffices to show that there is a canonical isomorphism of principal $W$-bundles on $Z$ between $\pi^*(F_N)$ and $\pi^*(Z)$ 
which, moreover, respects the $W$-linearisations induced by pull-back. To verify this condition, we shall firstly describe a canonical isomorphism between each principal bundle and $Z \times W$, and then show that, by transport of structure via these isomorphisms, we get the same $W$-linearisation on $Z \times W$, namely, the canonical $W$-linearisation.

Since $\pi:Z \ra X$ is a Galois covering of Galois group $W$, we have canonical isomorphisms  $$\pi^*Z \simeq Z \times_X Z \simeq Z \times W.$$ The $W$-linearisation on $\pi^*Z$ induces the canonical $W$-linearisation on $Z \times W$, that is 
$$\begin{matrix} \phi : & W \times Z \times W & \ra & Z \times W \\
                        & (w, (z,w'))         & \ms & (zw,w^{-1}w').
\end{matrix}$$

We have the natural isomorphisms 
$${\pi}^*(F_N/T) \ra \pi^*(F_N) /T \ra (E_T \times^T N)/T.$$ By the isomorphism $E_T \times_w^T T/T \simeq Z$ we deduce the following commutatif diagram where all arrows are isomorphisms

\begin{equation} \label{isopull}
\xymatrix{
 (E_T \times^T N)/T \ar[r] & E_T/T \times N/T \ar[r] & Z \times W \\
 (E_T \times_w^T T \times^T N)/T \ar[r] \ar[u]^{{(\nu_w \times^T N)}/T} & (E_T \times_w^T T)/T \times N/T \ar[r] \ar[u]^{\nu_w/T \times \Id} & Z \times W \ar[u]^{\Id}
}
\end{equation}

If we extend the upper sequence of (\ref{Naction}) by $\times^T N$ and then we quotient by the group $T$, by the isomorphism $$E \times_w T /T \simeq Z$$  and by (\ref{isopull}) we obtain the following action of $W$ on $Z \times W$ : for $(w,\gamma') \in \cG^{\sigma}(Z \times W)$

\begin{equation*} \label{Waction}
\xymatrix{
Z \times W \ar[r]^{\gamma{}'~~~} & (w^* Z \times W) \ar[r] \ar[d] & Z \times W \ar[r]^{\simeq}_{\Id} \ar[d] & Z \times W \\
                            &               Z \ar[r]^w       &                         Z               &
}
\end{equation*}
where $\gamma' : Z \ra w^* Z$ is an isomorphism. We remark that this action is constant on the second factor $W$ of $Z \times W$.
Thus the action in propositon \ref{lin} by passing to quotient by $T$ becomes $$\overline{\phi(E_T)}(w)(z,w') =(zw,w^{-1}w'): $$ which is the canonical $W$-linearisation.

\end{proof}

\begin{prop} \label{pushdown=abl} Let $\pi: Z \ra X$ be a Galois cover with Galois group $W$. Let
 $E$ be a principal $T-$bundle on $Z$. Then $E$ is a principal $\cG^{\sigma}(E)-$bundle on $X$.
\end{prop}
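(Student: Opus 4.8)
The plan is to check directly that $E$, endowed with the right action $\mu$ of $\cG^{\sigma}(E)$ supplied by Remark \ref{action}, is a $\cG^{\sigma}(E)$-torsor over $X$. Everything rests on glueing the two torsor structures we already have --- the principal $T$-bundle $E \ra Z$ and the principal $W$-bundle $Z \ra X$ arising from the \'etale Galois cover --- along the extension $0 \ra T \ra \cG^{\sigma}(E) \ra W \ra 0$ of (\ref{mumgp}). Concretely I would verify the three usual axioms: that $E/\cG^{\sigma}(E) = X$, that $\cG^{\sigma}(E)$ acts freely and transitively on the fibres of the composite $E \ra Z \ra X$, and that the bundle is locally trivial.

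For the quotient, Remark \ref{action} says that $\mu$ restricts on the normal subgroup $T \subset \cG^{\sigma}(E)$ to the given $T$-action, and descends on $W = \cG^{\sigma}(E)/T$ to the Galois action of $W$ on $Z$ over $X$; taking the quotient in two stages then gives $E/\cG^{\sigma}(E) = (E/T)/W = Z/W = X$. For simple transitivity I would study $\Theta : E \times \cG^{\sigma}(E) \ra E \times_X E$, $(e,n) \ms (e, \mu_n(e))$. Given $(e_1,e_2) \in E \times_X E$, I project to $(z_1,z_2) \in Z \times_X Z$; since $Z \ra X$ is a $W$-torsor there is a unique $w \in W$ with $z_2 = z_1 \cdot w$. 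Choosing any lift $n_0 \in \cG^{\sigma}(E)$ of $w$, the point $\mu_{n_0}(e_1)$ lies in the same fibre of $E \ra Z$ as $e_2$, so the $T$-torsor structure yields a unique $t \in T$ with $e_2 = \mu_{n_0 t}(e_1)$; then $n := n_0 t$ is the unique element carrying $e_1$ to $e_2$. Freeness of the $T$-action on the fibres of $E \ra Z$ and of the $W$-action on $Z$ give the uniqueness, so $\Theta$ is bijective on geometric points.

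To promote this to a statement of schemes I would establish local triviality: \'etale-locally on $X$ the Galois cover splits as $Z|_U \cong U \times W$, and over each sheet $E$ trivialises as a $T$-torsor, whence $E|_U \cong U \times \cG^{\sigma}(E)$ compatibly with $\mu$; this simultaneously shows $\Theta$ is an isomorphism. The step I expect to be the main obstacle is exactly matching such a local trivialisation with the group law of $\cG^{\sigma}(E)$: because the extension is in general non-split, the Galois action on $Z$ does not lift to $E$, so one cannot split $\cG^{\sigma}(E)$ as $T \times W$ and must run the argument through an arbitrary set-theoretic section $w \mapsto n_0$ of $p$, absorbing its failure to be a homomorphism into the free $T$-action. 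This is governed by the same factor set $f_\alpha$ of (\ref{factorset}) that controlled the class $c(E_T)$, and it is what guarantees the local trivialisations glue to a genuine $\cG^{\sigma}(E)$-torsor rather than to a fibre bundle merely having that structure group.
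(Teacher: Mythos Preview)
Your direct verification of the torsor axioms is correct and is a genuinely different route from the paper's. The paper never checks freeness, transitivity, or local triviality of the $\cG^{\sigma}(E)$-action by hand. Instead it goes through the abelianisation construction: writing $N=\cG^{\sigma}(E)$, it forms the associated bundle $E\times^T N$ on $Z$, which by Remark~\ref{lin} carries a canonical $W$-linearisation, and lets $F_N$ be its descent to $X$. Lemma~\ref{quotparTsurZ} gives $F_N/T\cong Z$, so $F_N$ is simultaneously a principal $N$-bundle on $X$ and a principal $T$-bundle on $Z$; the map $e\mapsto\overline{(e,1)}$ then supplies a $T$-equivariant morphism $E\to F_N$ over $Z$, hence an isomorphism of $T$-bundles, and $E$ inherits its $N$-bundle structure on $X$ from $F_N$.

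Your approach is the more elementary one: it uses only Remark~\ref{action} and bypasses Lemma~\ref{quotparTsurZ} entirely. (Incidentally, the obstacle you flag for local triviality is milder than you suggest: once fibrewise simple transitivity is in hand, any \'etale-local section $s:U\to E$ of the composite $E\to X$ produces the trivialisation $(u,n)\mapsto\mu_n(s(u))$ directly, and the factor set never enters --- it only appears if one tries to choose sections sheet-by-sheet and then reconcile them.) The paper's indirect argument, however, buys something extra: identifying $E$ with $F_N=\Delta_\theta(E)$ is exactly the content reused in Lemma~\ref{recMXN} and Corollary~\ref{ablinj} to show injectivity of the abelianisation map. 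So the paper's route pays for the next results, whereas yours would leave that identification to be redone.
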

\begin{proof} Let us denote the Mumford group $\cG^{\sigma}(E)$ by $N$.

Consider the natural map of sheaves on $Z$
$$\begin{matrix}
\phi: & E & \ra & E \times^T N \\
      & e & \ms & \ol{(e,1)}
\end{matrix}$$
The bundle $E \times^T  N$ admits a canonical $W-$linearisation by the proposition \ref{lin} and we denote by $p: E \times^T N \ra E \times^T N/W =: F_N$ the quotient by $W$. The lemma \ref{quotparTsurZ} implies that $F_N/T$ is isomorphic to $Z$. Thus $F_N$ seen as a sheaf on $Z$ is a principal $T-$bundle. Thus the composition of arrows $p \circ \phi : E \ra F_N$ is a map of sheaves between principal $T-$bundles on  $Z$. Now $p \circ \phi$ is also $T-$linear, thus it is also a map of principal $T-$bundles. So it is an isomorphism since in the category of principal group bundles, morphisms are isomorphisms. Thus $E$ is a fortiori isomorphic to $F_N$ on $X$. Now $F_N$ is a principal$N-$bundle, and therefore so is $E$.
\end{proof}

\begin{lem} \label{mumgpE_N} Let $N$ be an arbitrary extension of $W$ by $T$ and $\eta \in H^2(W,T)$ denote its class. Let $p: E_N \ra X$ be a principal $N-$bundle on $X$ and we denote by  $\pi: Z \ra X$ its quotient by $T$. There exists a canonical isomorphism between $N$ and the Mumford group of the principal bundle $E_N$ seen as a principal $T-$bundle on $Z$. Moreover via these isomorphisms the natural actions of these groups on $E_N$ can also be identified.
\end{lem}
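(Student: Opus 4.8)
The plan is to realize both $N$ and the Mumford group $\cG^{\sigma}(E_N)$ as one and the same group of ``twisted symmetries'' of the total space $E_N$, and then to read off the identification of the two actions directly from this common description.

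First I would set up the right--translation picture. Write $p: N \ra W$ for the projection and view $E_N$ as a principal $T$-bundle over $Z = E_N/T$. For $n \in N$ with image $w = p(n)$, right translation $R_n: E_N \ra E_N$, $e \ms e\cdot n$, is an automorphism of the total space covering the action of $w$ on $Z$, since $R_n(e\cdot t) = e\cdot n\cdot(n^{-1}tn) = R_n(e)\cdot \sigma(w^{-1})(t)$, the conjugation $t \ms n^{-1}tn$ being exactly the prescribed action of $w$ on $T$ (it is independent of the chosen lift because $T$ is abelian). Thus each $R_n$ is a $\sigma$-twisted, $T$-equivariant self-map of $E_N$ lying over the deck transformation $w$. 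Let $G$ denote the group of all pairs $(w,\psi)$ with $w \in W$ and $\psi: E_N \ra E_N$ covering $w$ on $Z$ and satisfying $\psi(e\cdot t) = \psi(e)\cdot \sigma(w^{-1})(t)$. Then $n \ms (p(n),R_n)$ intertwines the group law of $N$ with composition in $G$ (it is a right action of $N$, just as $\mu$ is), and it is injective because the $N$-action on $E_N$ is free. In particular, since an $R_n$ exists for every $w$, the bundle $w.E_N$ is isomorphic to $E_N$ for all $w$, so $E_N$ lies in $H^1(Z,\ul T)^W$ and its Mumford group is defined.

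Next I would identify $\cG^{\sigma}(E_N)$ with $G$ via the action $\mu$ of (\ref{Naction}). By Remark~\ref{action}, the assignment $(w,\gamma) \ms \mu_{(w,\gamma)}$ produces, for each Mumford element, precisely a $T$-twisted-equivariant self-map of $E_N$ lifting the $w$-action on $Z$; hence $\mu$ maps $\cG^{\sigma}(E_N)$ into $G$ as a right action. To see it is an isomorphism I would argue fiberwise over $W$. Above a fixed $w$, the elements of $\cG^{\sigma}(E_N)$ are the isomorphisms $\gamma: E_N \ra w.E_N$, a torsor under $\Aut_T(E_N) = T$, while the elements of $G$ over $w$ form a torsor under the same $T$ (two such self-maps differ by a $T$-bundle automorphism of $E_N$ over $Z$). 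Since $\mu$ is $T$-equivariant and injective --- $\mu_{(w,\gamma)} = \mathrm{id}$ forces $w = 1$ by the free $W$-action on $Z$, whence $\gamma = \mathrm{id}$ --- it is a bijection over each $w$, hence an isomorphism $\cG^{\sigma}(E_N) \cong G$.

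The identical torsor argument applies to $R: N \ra G$: both $N$ and $G$ are extensions of $W$ by $T$, the map $R$ covers the identity of $W$ and restricts on $T$ to the standard $T$-action on $E_N$, so the two short exact sequences fit in a diagram with identities on the outer terms and $R$ is an isomorphism by the five lemma. Composing, $\mu^{-1}\circ R: N \ra \cG^{\sigma}(E_N)$ is the desired canonical isomorphism (a genuine homomorphism, since $R$ and $\mu$ are both right actions), and because it is characterized by $R_n = \mu_{\mu^{-1}R(n)}$, the right $N$-action on $E_N$ and the Mumford action of (\ref{Naction}) correspond under it, which is the second assertion. I expect the main obstacle to be the bookkeeping of conventions: verifying that the conjugation $t \ms n^{-1}tn$ matches the $\sigma$-twist built into $w.E_N$ and into $\nu_w$, and making the torsor count in the surjectivity step fully precise, i.e.\ that \emph{every} twisted-equivariant self-map over $w$ arises from an isomorphism $E_N \ra w.E_N$.
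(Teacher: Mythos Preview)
Your proof is correct and follows essentially the same route as the paper's: identify the right translation $R_n$ on $E_N$ with the Mumford action over $Z$. The paper is much terser---it records the commutative square of $R_n$ over $\overline{n}$, asserts this amounts to $E_N$ being $W$-invariant, and concludes directly that $\cG^{\sigma}(E_N)\simeq N$---while you spell out the intermediate group $G$ and the torsor/five-lemma argument that makes the isomorphism (in particular its surjectivity) explicit.
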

\begin{proof} For $n \in N$, let $\overline{n}$ denote the class of $n$ in $W$. For all $n \in N$, for the action of $N$ on the princiapl $N-$bundle $E_N$ and of $W$ on the principal $W-$bundle $Z$, we have the following cartesian  diagram preserving the  fibers of $E_N \ra Z \ra X$
\begin{equation*}
\xymatrix{
E_N \ar[r]^n \ar[d]          & E_N \ar[d] \\
Z   \ar[r]^{\overline{n}}    & Z. 
}
\end{equation*}
Now $E_N$ is a principal $T-$bundle on $Z$. The commutation of the above diagram is equivalent to the fact that $E_N$ is a $W-$invariant principal $T-$bundle for the twisted action of the group $W$. Thus, we conclude that the Mumford group of  $E_N$, seen as a principal $T-$bundle on $Z$, is $N$. From this it also follows that the action of the Mumford group upon $E_N$ gets identified with the usual  multiplication action by $N$.
\end{proof}

\begin{lem} \label{recMXN} Let $p: E_N \ra X$ be a principal $N-$bundle on $X$. We denote by  $\pi: Z \ra X$ the quotient by $T$. The bundle $E_N$ seen as a principal $T-$bundle on $Z$ is sent by the abelianisation map to  $E_N$ seen as a principal $N-$bundle on  $X$.
\end{lem}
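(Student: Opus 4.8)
The plan is to produce an explicit evaluation morphism $F_N\ra E_N$ and to check that it is an isomorphism of principal $N$-bundles over $X$; this exhibits Lemma \ref{recMXN} as the statement inverse to Proposition \ref{pushdown=abl}, whose map it will turn out to invert. First I would write $E':=E_N$ regarded as a principal $T$-bundle on $Z=E_N/T$. By Lemma \ref{mumgpE_N} the Mumford group $\cG^{\sigma}(E')$ is canonically identified with $N$, and under this identification the Mumford action $\mu$ on $E'$ is the original right $N$-multiplication on $E_N$. In particular $E'$ is $W$-invariant, and by Proposition \ref{flechec} one has $c(E')=[0\ra T\ra\cG^{\sigma}(E')\ra W\ra 0]=\eta$; hence $E'\in H^1(Z,\ul{T})^W_{\eta}$, the abelianisation map is defined on it, and it produces $F_N=(E'\times^T N)/\Phi_{E'}$.

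Next I would introduce
\[ \ev:E'\times^T N\ra E_N,\qquad \ol{(e,n')}\ms \mu_{n'}(e). \]
A short computation with the relation $(et,n')\sim(e,tn')$ defining $\times^T$ shows that $\ev$ is well defined, and using $\mu_{n'}(e)=e\,n'$ it intertwines the standard right $N$-action $\ol{(e,n')}\cdot n=\ol{(e,n'n)}$ on $E'\times^T N$ with the original right $N$-action on $E_N$, so that $\ev$ is $N$-equivariant and a morphism over $X$. The key observation is that the subgroup $T\subset N$ acts trivially through $\Psi_{E'}$, since $\Psi_{E'}(\ol{(e,n')},t)=\ol{(\mu_t(e),t^{-1}n')}=\ol{(et,t^{-1}n')}=\ol{(e,n')}$; therefore $\Psi_{E'}$ descends to a $W$-action on $E'\times^T N$, which I would identify with the linearisation $\Phi_{E'}$ of Remark \ref{lin}. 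Granting this, the very computation used for $N$-equivariance gives $\ev\circ\Psi_{E'}(\cdot,n)=\ev$, so $\ev$ is $\Phi_{E'}$-invariant and descends to a morphism $\ol{\ev}:F_N\ra E_N$ over $X$.

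Finally, since $F_N$ and $E_N$ are both principal $N$-bundles over $X$ and $\ol{\ev}$ is an $N$-equivariant morphism over $X$, it is an isomorphism, because in the category of principal bundles every morphism is an isomorphism — exactly the step already used in Proposition \ref{pushdown=abl}. One also checks that $\ol{\ev}$ inverts the map $p\circ\phi$ of that proof, since $\ev(\ol{(e,1)})=\mu_1(e)=e$. The step I expect to be the main obstacle is the identification of the descended $\Psi_{E'}$-action with the $W$-linearisation $\Phi_{E'}$: this requires unwinding the precise construction of $\Phi_{E'}$ supplied by Remark \ref{lin} (following \cite{lp} Prop 6.6) together with its compatibility, via Lemma \ref{quotparTsurZ}, with the canonical identifications on $Z\times W$. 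Once that identification is in place, the $W$-invariance of $\ev$, and hence the whole lemma, follows from the elementary computation above.
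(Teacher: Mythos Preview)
Your proof is correct and follows essentially the same route as the paper: the paper's key map is also $\overline{(e,n)}\mapsto en$, though the paper factors it as the isomorphism $\alpha:E_N\times_T N\to Z\times_X E_N=\pi^*E_N$, $\overline{(e,n)}\mapsto(\bar e,en)$, checks that $\alpha$ intertwines $\Psi_{E_N}$ with the action $(n,(z,e))\mapsto(z\bar n,e)$, and then recovers $E_N$ as $\pi^*E_N/W$ by descent. The identification you flag as the main obstacle---that the $W$-linearisation $\Phi_{E'}$ is the descent of $\Psi_{E'}$---is used just as implicitly in the paper's proof (when it equates the quotient by the $N$-action $\Psi$ with the quotient ``by $W$ for the canonical $W$-linearisation of abelianisation''), so you are not missing anything the paper itself supplies.
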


\begin{proof} By the lemma \ref{mumgpE_N} we conclude that $E_N \in H^1(Z,\underline{T})^W$ and since its Mumford group is $N$ so $E_N \in H^1(Z, \underline{T})^W_{\eta}$. Since $E_N$ is a principal $N-$bundle on $X$, we have a canonical isomorphism 
$$\begin{matrix}
E_N \times N & \ra & E_N \times_X E_N \\
  (e,n)      & \ms & (e,en).
\end{matrix}$$
When we quotient $E_N \times N$ by $T$ by putting the relation $(et,n)=(e,tn)$, the last isomorphism implies the relation  $(et,etn)= (e,etn)$ on $E_N \times_X E_N$ and we obtain 
$$\begin{matrix}
 \alpha: & E_N \times_T N    & \ra & Z \times_X E_N \\
         & \overline{(e,n)}  & \ms & (\overline{e},en)
\end{matrix}$$
where $\overline{e}$ is the image of $e \in E_N$ in $Z$.
Recall the action of $N$ on $E_N \times_T N$ of abelianisation 
$$\begin{matrix}
N \times E_N \times_T N & \ra & E_N \times_T N \\
    (n,(e,n'))              & \ms & (e.n,n^{-1}n'),
\end{matrix}$$

and consider the action of $N$ on $Z \times_X E_N$ 
$$\begin{matrix}
N \times  Z \times_X E_N & \ra & Z \times_X E_N \\
         (n,(z,e))            & \ra & (z\overline{n},e). 
\end{matrix}$$

Now for these actions for all $n \in N$, we have the following commutative diagram
\begin{equation*}
\xymatrix{
E_N \times_T N \ar[d]^{\alpha} \ar[r]^{n}                 & E_N \times_T N \ar[d]^{\alpha} \\
Z \times_X E_N   \ar[r]^{n}                                    & Z \times_W E_N.
}
\end{equation*}
Thus, $E_N$ - the quotient of $Z \times_X E_N$ by $W$ is isomorphic on $X$ to the quotient of $E_N \times_T N$ by $W$ for the canonical $W-$linearisation of abelianisation. This last quotient is the image of $E_N$ seen as a principal $T-$bundle on $Z$ by $\Delta_\theta$. Now the assertion follows.
\end{proof}

\begin{cor} \label{ablinj} Let $\pi: Z \ra X$ be a Galois \'etale covering. Let $p: E \ra Z$ be a principal $T-$bundle. The abelianisation map $$\Delta_\theta: H^1(Z,\ul{T})^W_{\eta} \ra \cM_X(N)$$ is injective.
\end{cor}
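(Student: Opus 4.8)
The plan is to prove injectivity by exhibiting a left inverse to $\Delta_\theta$, after which the conclusion is formal. The natural candidate is the map $r$ that sends a principal $N$-bundle $F_N$ on $X$ lying over $\pi: Z \ra X$ to its underlying principal $T$-bundle on $Z$. This $r$ is well posed precisely because of the results already established: by Lemma \ref{quotparTsurZ} the quotient $F_N/T$ is \emph{canonically} isomorphic to the fixed cover $Z$, so $F_N$ really is a principal $T$-bundle on $Z$ rather than on some abstract copy of it, and by Lemma \ref{mumgpE_N} its Mumford group is canonically $N$, so that $r(F_N) \in H^1(Z,\ul{T})^W_{\eta}$.

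Next I would check the identity $r \circ \Delta_\theta = \Id$ on $H^1(Z,\ul{T})^W_{\eta}$. Fix $E_T$ and set $F_N := \Delta_\theta(E_T)$. The proof of Proposition \ref{pushdown=abl} constructs the composite $p \circ \phi : E_T \ra F_N$ and shows it is an isomorphism of principal $T$-bundles on $Z$ before being reinterpreted as an isomorphism of $N$-bundles on $X$. Reading this through the canonical identification $F_N/T \simeq Z$ of Lemma \ref{quotparTsurZ} is exactly the assertion $r(\Delta_\theta(E_T)) = E_T$. Hence $r$ is a left inverse of $\Delta_\theta$, and injectivity follows: if $\Delta_\theta(E_T) \simeq \Delta_\theta(E_T')$ in $\cM_X(N)$, then applying $r$ gives $E_T \simeq E_T'$ in $H^1(Z,\ul{T})^W_{\eta}$.

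The step I expect to be the main obstacle is confirming that $r$ is genuinely well defined on isomorphism classes, i.e.\ that $r$ respects the identification with the \emph{fixed} $Z$. An abstract isomorphism $\psi : \Delta_\theta(E_T) \ra \Delta_\theta(E_T')$ of $N$-bundles descends to an isomorphism $F_N/T \ra F_N'/T$ of $W$-bundles, and a priori this could introduce a nontrivial automorphism $\tau$ of $Z$ over $X$, yielding only $E_T \simeq \tau^* E_T'$ instead of $E_T \simeq E_T'$. What keeps this under control is the canonicity built into Lemma \ref{quotparTsurZ}, which pins the two quotients to $Z$ compatibly so that $\psi$ descends over the fixed base, together with the $W$-invariance inherent in membership in $H^1(Z,\ul{T})^W_{\eta}$, which absorbs the residual deck-transformation ambiguity. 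Once this compatibility is secured, applying $r$ to $\psi$ delivers the required isomorphism over $Z$ and completes the argument.
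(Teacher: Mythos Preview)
Your argument is essentially the paper's, repackaged in the language of a left inverse. The paper proceeds exactly as you do: it invokes Proposition~\ref{pushdown=abl} to view $E$ itself as an $N$-bundle on $X$, then uses Lemma~\ref{recMXN} (rather than reaching back into the proof of Proposition~\ref{pushdown=abl} as you do) to identify $\Delta_\theta(E)$ with that $N$-bundle; from $\Delta_\theta(E)\simeq\Delta_\theta(F)$ it gets $E\simeq F$ as $N$-bundles on $X$, and finally appeals to Lemma~\ref{quotparTsurZ} to pass back to an isomorphism of $T$-bundles on $Z$. So the skeleton is identical; the only cosmetic difference is that you cite the internal isomorphism $p\circ\phi$ from Proposition~\ref{pushdown=abl} directly, while the paper routes the same identification through Lemma~\ref{recMXN}.

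One point worth noting: the deck-transformation issue you flag as the ``main obstacle'' is real, and the paper's proof glosses over it just as much as yours does --- the sentence ``thus $E$ and $F$ are isomorphic as principal $T$-bundles on $Z$'' hides exactly the step you are worried about, since an $N$-bundle isomorphism $E\to F$ a priori only covers some $w\in W$ on $Z$ and yields $E\simeq w^*F$. Your proposed resolution (``$W$-invariance absorbs the ambiguity'') is not quite the right mechanism: $W$-invariance gives $w^*F\times_w T\simeq F$, not $w^*F\simeq F$. The clean fix is to precompose the given $N$-isomorphism with the $N$-action of a lift $n\in N$ of $w^{-1}$ on $E$; this produces a new $N$-equivariant isomorphism $E\to F$ now covering the identity on $Z$, hence a genuine $T$-bundle isomorphism over the fixed $Z$. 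With that adjustment your proof (and the paper's) goes through.
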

\begin{proof} 
By the proposition \ref{pushdown=abl}, $E$ is a principal $N-$bundle on $X$, where $N$ is the Mumford  group. By the lemma
\ref{recMXN}, we have $\abf(E) = E$ seen as a principal $N-$bundle on  $X$ . If $E$ and $F$ are     principal $T-$bundles on $Z$ such that $\abf(E)=\abf(F)$ then $E=F$ as principal $N-$bundles on $X$. By the lemma \ref{quotparTsurZ} we have $E/T$ (resp. $F/T$) is isomorphic to  $Z$. Thus $E$ and $F$ are isomorphic as principal $T-$bundles on $Z$.
\end{proof}

\begin{thm} \label{Ndecomposition} The fiber of the quotient by $T$ map $q_T: \cM_X(N) \ra \cM_X(W)$ over $\pi: Z \ra X$ is $H^1(Z,\ul{T})^W_{\eta}.$ 
\end{thm}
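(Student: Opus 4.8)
The plan is to show that the abelianisation map $\Delta_\theta : H^1(Z,\ul{T})^W_{\eta} \ra \cM_X(N)$ is a bijection onto the fiber $q_T^{-1}(Z)$ over $\pi$, by verifying three things: that it takes values in this fiber, that it is injective, and that it is surjective onto it. For the first point, I would take $E_T \in H^1(Z,\ul{T})^W_\eta$ and recall that $\Delta_\theta(E_T)$ is by definition the quotient $F_N$ of $E_N = E_T \times^T N$ by the linearisation $\Phi_{E_T}$; applying $q_T$ then amounts to passing to $F_N/T$. Lemma \ref{quotparTsurZ} identifies $F_N/T$ canonically with $Z$, so $q_T(\Delta_\theta(E_T))$ is the class of $\pi$ and $\Delta_\theta$ indeed lands in the fiber over $\pi$.

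Injectivity is exactly the content of Corollary \ref{ablinj}, which I would simply cite. The one substantive step is surjectivity onto the fiber. Given an arbitrary $N$-bundle $E_N$ in $q_T^{-1}(Z)$, so that its quotient by $T$ is identified with $Z$, I would regard $E_N$ as a principal $T$-bundle on $Z$. Lemma \ref{mumgpE_N} shows that the Mumford group of this $T$-bundle is canonically $N$; in particular $E_N$ is $W$-invariant for the twisted action, and its image under the map $c$ of Proposition \ref{flechec} is the class $\eta$, so $E_N \in H^1(Z,\ul{T})^W_\eta$. Lemma \ref{recMXN} then states precisely that $\Delta_\theta$ sends this $T$-bundle back to $E_N$ as an $N$-bundle on $X$. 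Hence every element of the fiber lies in the image of $\Delta_\theta$, and combining this with the first two steps shows that $\Delta_\theta$ restricts to a bijection from $H^1(Z,\ul{T})^W_\eta$ onto $q_T^{-1}(Z)$, which is the assertion.

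The bulk of the argument has already been distilled into the preceding lemmas, so the theorem is essentially their assembly rather than a new computation. The point I expect to require the most care is the well-definedness, on isomorphism classes, of the operation \emph{regard $E_N$ as a $T$-bundle on $Z$}: a priori the identification of $E_N/T$ with $Z$ is determined only up to the torsor $\Aut(Z/X) = W$, and it is precisely the passage to $W$-invariants in $H^1(Z,\ul{T})^W_\eta$ that absorbs this ambiguity. This is the step I would write out most carefully, leaning on the canonicity of the isomorphisms furnished by Lemmas \ref{mumgpE_N} and \ref{recMXN} to guarantee that the two passages, abelianisation and restriction to $Z$, are mutually inverse independently of any choices.
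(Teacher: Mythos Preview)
Your proposal is correct and follows essentially the same route as the paper: Lemma~\ref{quotparTsurZ} for landing in the fiber, Corollary~\ref{ablinj} for injectivity, and Lemma~\ref{recMXN} (together with Lemma~\ref{mumgpE_N}, which the paper invokes only implicitly via the proof of \ref{recMXN}) for surjectivity. Your extra care about the $W$-ambiguity in identifying $E_N/T$ with $Z$ is a reasonable point that the paper glosses over, but the overall strategy is identical.
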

\begin{proof} By the lemma \ref{quotparTsurZ} the map quotient by $T$ maps  $H^1(Z,\uT)^W_{\eta}$ to the fiber of $q_T$. By the lemma \ref{ablinj}, this map to the fiber of $q_T$ is injective and by \ref{recMXN} it is surjective.  
\end{proof}

\section{Applications}

\begin{ex}
Let $\pi: Z \ra X$ be a covering Galois group $D_{2n}$ the dihedral group of order $2n$. When we quotient $Z$ by $\ZZ/n\ZZ \subset D_{2n}$, we obtain a double covering $p: Y \ra X$. We have the following exact sequence 
$$ 0 \ra \ZZ/n\ZZ \ra D_{2n} \ra \ZZ/2\ZZ \ra 0$$ with the action of $\ZZ/2\ZZ$ on $\ZZ/n\ZZ$ by $e(\ol{k})=\ol{k}$ and $\sigma(\ol{k})=-\ol{k}$. The curve $Z$ determines a primitive element, denoted as $Z$ again, in  $\Jac(Y)[n]$ invariant under the action of $\ZZ/2\ZZ$ switching the two sheets. 

Let us suppose that $n$ is odd. We denote by $\Prym(Y/X)$ the Prym variety associated to the \'etale double cover $p: Y \ra X$. We have the isogeny $\Jac(X) \times \Prym(Y/X) \ra \Jac(Y)$. 
Let us consider the points of $n-$torsion in $\Jac(Y)$, denoted $\Jac(Y)[n]$. 
Thus the isogeny gives an isomorphism between abelian groups

$$ \Jac(X)[n] \times \Prym(Y/X)[n] \ra \Jac(Y)[n],$$
since the kernel of the isogeny is consists of points of $2-$torsion in $\Jac(X)$. Now the involution $\sigma: Y \ra Y$ above $X$ operates as $+\Id$ on $\Jac(X)[n]$ and  $-\Id$ on $\Prym(Y/X)[n]$. Let $\alpha= (\beta, \gamma) \in \Jac(X)[n] \times \Prym(Y/X)[n]$.
\begin{enumerate}
\item If $\sigma(\alpha)= \alpha$, then $(\beta, - \gamma) = (\beta, \gamma)$, that is  $\gamma=0$. Thus $\alpha \in \Jac(X)[n]$. Thus the \'etale covering  $\alpha$ is the pull-back on  $Y$ of a cyclic \'etale covering of degree $n$ on $X$ and the associated total covering $\alpha$ is Galois of Galois group  $\ZZ/2\ZZ \times \ZZ/n\ZZ$.
\item If $\sigma(\alpha) = - \alpha$, then $(\beta, -\gamma) = (-\beta, - \gamma)$, that is  $\beta = 0$. Thus  $\alpha \in \Prym(Y/X)[n]$. Thus $\alpha$ is a cyclic \'etale covering of order $n$ on $Y$. The total covering $$Z \stackrel{n:1}{\ra} Y \stackrel{2:1}{\ra} X$$ 
gives an \'etale Galois covering of Galois group $D_{2n}$.
\end{enumerate}
\end{ex}

\begin{prop} \label{Wactiont} The Weyl group $W$ acts transitively on the fibers  of the map  $$\cM_X(T) \ra \cM_X(N)$$ of extension of structure group from $T$ to $N(T)$. The action is generically without fixed points. 
\end{prop}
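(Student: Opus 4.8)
The plan is to realize the fiber of the extension-of-structure-group map over a fixed $N$-bundle as the set of reductions of structure group from $N$ to $T$, and to analyze these reductions through the associated $W$-cover $E_N/T$.

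First I would invoke the standard correspondence: reductions of structure group of a principal $N$-bundle $E_N$ to the subgroup $T$ are in bijection with global sections of $E_N/T \to X$, which is a principal $W$-bundle since $T$ is normal in $N$ and $N/T = W$. Fix a $T$-bundle $E_T$ and set $E_N = E_T \times^T N$. The canonical inclusion $e \mapsto \overline{(e,1)}$ followed by the projection $E_N \to E_N/T$ is $T$-invariant, so it descends to a section $X = E_T/T \to E_N/T$. A principal $W$-bundle that admits a section is trivial, hence $E_N/T \cong X \times W$, and over the connected curve $X$ its sections are exactly the constant ones, indexed by $W$. Thus $E_N$ has precisely $|W|$ reductions to $T$ as sub-bundles, permuted simply transitively by the principal $W$-action on $X \times W$.

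Next I would match this with the $W$-action on $\cM_X(T)$, which here is $w \cdot E_T = E_T \times_w T$, extension of structure group along $\sigma(w): T \to T$. The key computation is $(E_T \times_w T) \times^T N \cong E_T \times^T N$: writing $\sigma(w)$ as conjugation by a lift $\tilde{w} \in N$ of $w$, extension of structure group along the conjugate inclusion $t \mapsto \tilde{w} t \tilde{w}^{-1}$ is isomorphic to the standard one via right translation by $\tilde{w}$. Consequently every $w \cdot E_T$ lies in the fiber over $E_N$, and this twisted action is $W$-equivariant for the simply transitive action on the sections of $X \times W$. Conversely, every reduction of $E_N$, being such a section, is one of the $w \cdot E_T$. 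Hence the fiber over $E_N$ is exactly the orbit $\{[w \cdot E_T] : w \in W\}$ in $\cM_X(T)$, which gives transitivity.

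For the final assertion I would prove generic freeness by bounding fixed loci. The class $[E_T]$ is fixed by $w$ exactly when $w \cdot E_T \cong E_T$ in $\cM_X(T)$. Since the Weyl group acts faithfully on the cocharacter lattice $X_*(T)$, every nontrivial $w$ has $\sigma(w) \neq \Id$, so the fixed locus $\cM_X(T)^w$ — built, on each connected component, from the abelian subvariety associated to $X_*(T)^{\sigma(w)} \subsetneq X_*(T)$ — is a proper closed subset of $\cM_X(T)$. The union over the finitely many $w \neq e$ remains a proper closed subset, whose dense open complement is where all stabilizers are trivial; there the action is without fixed points. The main subtlety, and the point I expect to require the most care, is to keep distinct the two notions in play: equality of reductions as sub-bundles (sections of $E_N/T$, on which $W$ acts simply transitively) versus isomorphism as abstract $T$-bundles (points of $\cM_X(T)$). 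Transitivity uses only the surjection from the former to the latter, while it is precisely the collapsing of distinct sections to a single isomorphism class that produces fixed points, and controlling this collapse is exactly where the faithfulness of $W$ on $X_*(T)$ is essential.
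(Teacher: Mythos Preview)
Your argument is correct and is essentially the paper's own, unpacked. The paper frames transitivity through the long exact sequence in non-abelian cohomology attached to $1 \to \Aut_T(E) \to \Aut_N(E_N) \to \underline{W} \to 1$, but then immediately interprets the connecting map via sections of $E_N/T$ and arrives at the same formula $\delta(w)E = E \times^w T$ that you derive directly; for generic freeness the paper simply asserts that a generic $E$ has trivial stabilizer, so your dimension count via faithfulness on $X_*(T)$ is in fact more detailed than what appears there.
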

\begin{proof} For $E \in H^1(X,\ul{T})$, let us denote $E \times^T N$ by $E_N$. We consider the short exact sequence of group schemes 

\begin{equation}
0 \ra \Aut_T(E) \ra \Aut_N(E_N) \ra \underline{W} \ra 0 
\end{equation}
We have the associated long exact sequence (we omit the curve $X$ in the notation)
$$\begin{matrix}
0 & \ra & H^0(\Aut_T(E)) & \ra & H^0(\Aut_N(E_N)) & \ra & H^0(\underline{W}) \\   & \stackrel{\delta}{\ra} &
H^1(\Aut_T(E)) & \ra & H^1(\Aut_N(E_N)) & \ra & H^1(\underline{W}) 
\end{matrix}$$

The distinguished elements of the sets $H^1(\Aut_T(E))$ and $H^1(\Aut_N(E_N))$ are $E$ and $E_N$ respectively. Let $\sigma : X \ra E_N/T$ be the section corresponding to $E$. The group $H^0(\underline{W})=W$ acts on $E$ in the following way: $\delta(w) E$ is the principal $T$-bundle obtained  by the section $(\sigma,w): X \ra E_N \times^N N/T \simeq E_N/T$. Thus we have 

\begin{equation} \label{weylactionTfibre}
\delta(w) E = E \times^w T.
\end{equation}
 
As the sequence gives the fiber on the distinguished element, we deduce that the group $W$ acts transitively on the fiber of $E_N$, from which the first assertion follows. For a generic $E \in H^1(X,T)$ we have $Stab(E)=\{e\}$ which implies the second assertion.

\end{proof}

\begin{prop} Let $T$ be a torus of a Lie group. Let $0 \ra T \ra N \ra W \ra 0$ be an extension of $W$ by $T$. The map extension of structure group from $N$ to $W$ $$\cM_X(N(T)) \ra \cM_X(W)$$ is surjective.
\end{prop}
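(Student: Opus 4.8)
The plan is to recognize the map in the statement as the quotient map $q_T$ and to feed it into the fibre description already established. Extension of structure group along the projection $N \ra W$ sends an $N$-bundle $P$ to $P \times^N W \simeq P/T$, so the map $\cM_X(N) \ra \cM_X(W)$ under consideration is exactly $q_T$. By Theorem \ref{Ndecomposition}, the fibre of $q_T$ over a point $\pi: Z \ra X$ of $\cM_X(W)$ is $H^1(Z,\uT)^W_{\eta}$, the set of $W$-invariant $T$-bundles $E_T$ with $c(E_T)=\eta$. Hence surjectivity of $q_T$ is equivalent to the assertion that, for every $W$-cover $\pi: Z \ra X$, the class $\eta$ lies in the image of the homomorphism $c$ of Proposition \ref{flechec}.

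First I would invoke the low-degree exact sequence of the Grothendieck spectral sequence recalled in the proof of Proposition \ref{flechec}, in particular its segment
\[ H^1(Z,\uT)^W \stackrel{c}{\ra} H^2(W,T) \ra H^2(Z;W;\uT), \]
whose exactness at $H^2(W,T)$ gives $\im(c) = \Ker\bigl(H^2(W,T) \ra H^2(Z;W;\uT)\bigr)$. Consequently it suffices to prove the vanishing $H^2(Z;W;\uT)=0$, which forces $c$ to be surjective and in particular puts $\eta \in \im(c)$, uniformly in $Z$.

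The core step is this vanishing, and it is here that the hypothesis that $T$ is a torus is used essentially; I work throughout in the analytic topology, where $T$-bundles are classified by $H^1(Z,\uT)$. Since $\pi$ is \'etale Galois, the $W$-action on $Z$ is free, so the equivariant cohomology identifies with cohomology on $X$ of the descended sheaf, $H^i(Z;W;\uT) \cong H^i(X,\mathcal{S})$, where $\mathcal{S}$ is the sheaf of sections of the associated bundle of tori $Z \times^W T \ra X$ (twisted by $\sigma$). For a torus the exponential map provides a short exact sequence of $W$-equivariant sheaves $0 \ra \cL \ra \cV \ra \mathcal{S} \ra 0$, which descends to $X$, with $\cL = Z \times^W X_*(T)$ the local system of cocharacter lattices and $\cV = Z \times^W \mathrm{Lie}(T)$ the associated locally free, hence coherent, Lie algebra bundle. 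In the resulting long exact sequence the term $H^2(X,\cV)$ vanishes because $\cV$ is a coherent sheaf on the curve $X$, and $H^3(X,\cL)$ vanishes because $X$ is a compact two-real-dimensional surface and $\cL$ is a local system. Hence $H^2(X,\mathcal{S})=0$, giving the desired vanishing.

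Putting this together, $c$ is surjective for every $\pi: Z \ra X$, so the fibre $H^1(Z,\uT)^W_{\eta}$ is non-empty, and by Theorem \ref{Ndecomposition} this is precisely the fibre of $q_T$ over $Z$; surjectivity follows. The main obstacle I anticipate is the careful identification of $H^2(Z;W;\uT)$ with $H^2(X,\mathcal{S})$ and the verification that the exponential sequence descends $W$-equivariantly to $X$; once this bookkeeping is in place the two vanishing statements are standard dimension arguments. It is worth noting that the torus hypothesis is not cosmetic: for a finite abelian $T$ the sheaf $\mathcal{S}$ is a local system on the surface $X$ and $H^2(X,\mathcal{S})$ need not vanish, which is exactly why the obstruction $\eta$ can be genuinely nonzero in that setting.
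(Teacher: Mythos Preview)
Your strategy matches the paper's exactly: both reduce surjectivity of $q_T$ to the non-emptiness of the fibre $H^1(Z,\uT)^W_\eta$ over an arbitrary $\pi:Z\ra X$, using the abelianisation/fibre description already established. The only difference is that the paper obtains non-emptiness by citing Lemma~7.2 of \cite{lp}, whereas you supply a direct proof via the five-term sequence and the vanishing $H^2(Z;W;\uT)\cong H^2(X,\mathcal{S})=0$ coming from the exponential sequence for the torus $T$; your argument is essentially a reconstruction of that cited lemma, so the two proofs coincide in substance, yours being self-contained.
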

\begin{proof} An element $ (\pi: Z \ra X) \in \cM_X(W)$ corresponds to an \'etale Galois covering of curves on $X$. Since  $X$ is smooth and $\pi$ is \'etale, so $Z$ is smooth. Since $X$ is projective and $\pi$ is finite, so $Z$ is projective also. By the lemma 7.2 of Lange-Pauly \cite{lp}, we have that the set $H^1(Z,\ul{T})^W_{\eta}$ is non-empty where $\eta$ denotes the extension  class of $[0 \ra T \ra N \ra W \ra 0] = \eta \in H^2(W,T)$. By the lemma \ref{quotparTsurZ} $\pi: Z \ra X$ is the image of the composition of arrows $$\Pdon_{\eta} = H^1(Z, \underline{T})^W_\eta  \ra H^1(X,\underline{N}) \ra H^1(X, \underline{W}).$$
\end{proof}

\begin{ex} \label{WfibreBC} Let us consider a Weyl group $W$ of type $B_n$ or $C_n$. It is isomorphic to $(\ZZ/2 \ZZ)^n \ltimes \Sigma_n$. By the theorem \ref{Ndecomposition}, we have
$$\cM_X(W) = \ds{\bigsqcup_{(\pi: Z \ra X) \in \cM_X(\Sigma_n)} H^1(Z, \ul{T})^{\Sigma_n}_{\eta}}$$
where $T= (\ZZ/2\ZZ)^n$ and $\eta$ is the extension class  
$$[0 \ra (\ZZ/2\ZZ)^n \ra W \ra \Sigma_n \ra 0] = \eta \in H^2(\Sigma_n, (\ZZ/2\ZZ)^n).$$
Now a principal $T-$bundle on  $Z$ is a $n-$uple $(L_1, \cdots, L_n)$ where $L_i \in \Jac(Z)[2]$ are line bundles of order $2$.By the equation \ref{weylactionTfibre} of the proposition \ref{Wactiont},
we deduce that $\Sigma_n$ operates by permuting factors. Thus, the $L_i$ are isomorphic to eachother. Thus we have
$$H^1(Z,\ul{T})^W_{\eta} = \{L \in \Jac(Z)[2]| [\cG^{\sigma}((L, \cdots, L))] = \eta \}$$
\end{ex}

\begin{ex} The Weyl group $D_n$ is isomorphic to $(\ZZ/2\ZZ)^{n-1} \ltimes \Sigma_n$ where $\Sigma_n$ acts by permuting the factors on the subgroup of $(\ZZ/2\ZZ)^n$ having an even number of ones. Reasoning as before in the example \ref{WfibreBC} we find that  
$$ \cM_X(W) = \left\{ \begin{array}{rcl}
\cM_X(\Sigma_n) & n \quad \textrm{odd} \\
\ds{\bigsqcup_{(\pi: Z \ra X) \in \cM_X(\Sigma_n)}  \{L \in \Jac(Z)[2]| [\cG^{\sigma}((L, \cdots, L))] = \eta \}} & n \quad \textrm{even}
\end{array}
\right.$$
\end{ex}

\bibliographystyle{amsplain}

\end{document}